\newtheorem{theorem}{Theorem}
\newtheorem{lemma}[theorem]{Lemma}
\newtheorem{proposition}[theorem]{Proposition}
\newenvironment{proof}[1][Proof]{\noindent\textbf{#1.} }{\ \rule{0.5em}{0.5em}}
\begin{document}

\title{Groups from Class 2 Algebras and the Weil Character}
\author{Christakis A. Pallikaros$^a$ and Harold N. Ward$^b$}

\maketitle

{\vspace{1mm}\par\noindent\footnotesize\it{${}^{a}$~Department of Mathematics and Statistics, University of Cyprus, PO Box 20537,
1678 Nicosia, Cyprus\\
\phantom{${}^{a}$~}{}E-mail: pallikar@ucy.ac.cy\\
${}^{b}$~Department of Mathematics, University of Virginia, Charlottesville, VA 22904, USA\\
\phantom{${}^{b}$~}{}E-mail: hnw@virginia.edu

}}

\begin{abstract}
We investigate the behaviour of the Weil character of the symplectic group on
restriction to subgroups arising from commutative nilpotent algebras of class 2.
We give explicit descriptions of the decomposition of the Weil character when 
restricted to the unipotent radical of the stabilizer of a maximal totally 
isotropic subspace 
and to its centralizer. 
\end{abstract}

\section{Introduction}

Because of their significant properties, Weil representations play an
important role in the study of the representation theory of classical
groups. The characters of Weil representations have been computed by various
authors; see for example \cite{G}, \cite{H}, \cite{Is}, \cite{Tho}. We will
be making use of some explicit results concerning the Weil characters of the
symplectic group as these are obtained in \cite{MWR} via the `theta form'
(see also \cite{Tho}). The approach in~\cite{MWR} is to follow the treatment
for the Weil representation given in~\cite{SP}.

One of the aims of the present work is to follow up in the direction of some
of the investigations in \cite{PW}, \cite{PZ} concerning the restriction of
the Weil characters of symplectic and unitary groups to certain subgroups,
in particular to certain self-centralizing subgroups. In \cite{PZ} an
explicit description of the restriction of the Weil character (respectively,
the sum of the two Weil characters) of the unitary group (respectively,
symplectic group) to centralizers of regular unipotent elements was also
obtained. We remark here that the study of the behaviour of element
centralizers in Weil representations already began in \cite{HZ}.

At this point we introduce some notation. Fix $q$ a power of an odd prime
and consider the Weil representation and its character $\omega$
 for $\mathrm{Sp}%
(2n,q)$, regarded as a matrix group on $V=GF(q)^{2n}$ defined via the
symplectic form $\varphi$ having matrix $%
\begin{bmatrix}
0 & I \\
-I & 0%
\end{bmatrix}%
$. The subgroup $G$ of $\mathrm{Sp}(2n,q)$ to be considered is $G=\langle
-I\rangle\times B$ where $B$ is the set of matrices $g_Q=%
\begin{bmatrix}
I & 0 \\
Q & I%
\end{bmatrix}%
$, with $Q$ a symmetric matrix. So $B$ is the unipotent radical of the
stabilizer in $\mathrm{Sp}(2n,q)$ of a maximal totally isotropic subspace of
$V$, and $G$, which is the centralizer of $B$ in $\mathrm{Sp}(2n,q)$, is a
maximal Abelian subgroup of $\mathrm{Sp}(2n,q)$. Also note that the matrices
$g_Q-I$ with $g_Q\in B$ belong to a nilpotent subalgebra of the full matrix
algebra of $2n\times 2n$ matrices over $GF(q)$ of class 2 (compare with the
discussion in \cite{PW}).

The restriction $\omega|G$ is known to be multiplicity free. The main result
of the paper is Theorem~\ref{TheoremMain}, where we describe explicitly the
irreducible characters of $G$ (respectively, $B$) appearing in the
decomposition of $\omega|G$ (respectively, $\omega|B$). As a by-product, in
Section~\ref{ConfDecomp} we use the expression for the decomposition $%
\omega|B$ in order to obtain alternative derivations for the number of
solutions of the equation $Q(x)=\alpha$ with $\alpha\in GF(q)$, $x\in
GF(q)^n $ and $Q$ a quadratic form of $GF(q)^n$.

One can easily observe that a subgroup $H$ of $G$ with a multiplicity free
restriction $\omega |H$ satisfies $|H|\geq 2q^{n}$. In Section~\ref%
{SecMinWeil-freeSubgr} we show that this bound is actually attained and we
give a construction of such a subgroup $H$ with $|H|=2q^{n}$ and $\omega |H$
being multiplicity free. 
Finally, we show how an evaluation
concerning the values of the Weil character on the elements of this subgroup
$H$ provides a link with the Davenport--Hasse theorem on lifted Gauss sums.

The decompositions in our main theorem \ref{TheoremMain} corroborate results
in the preprint by Gurevich and Howe \cite{GH} (which appeared after we
finished our manuscript), especially those in Section~2.

\section{Preliminaries\label{SecPre}}

We consider the Weil representation and its character $\omega $ for $\mathrm{%
Sp}(2n,q)$, $q$ a power of an odd prime, as a matrix group on $V=GF(q)^{2n}$
(with right action). 
We will be following the notation about $\omega$ in~\cite{SP}, \cite{MWR} and
use some explicit results in~\cite{MWR} about its values.
The symplectic form $\varphi $ has matrix $%
\begin{bmatrix}
0 & I \\
-I & 0%
\end{bmatrix}%
$. Writing members of $V$ as pairs $(x,y)$, with $x,y\in GF(q)^{n}$, one has%
\begin{equation*}
\varphi ((x_{1},y_{1}),(x_{2},y_{2}))=x_{1}y_{2}^{T}-y_{1}x_{2}^{T},
\end{equation*}%
the superscript $T$ standing for transpose.

The subgroup $G$ of $\mathrm{Sp}(2n,q)$ to be considered is $G=\left\langle
-I\right\rangle \times B$, where $B$ is the set of matrices $g_{Q}=%
\begin{bmatrix}
I & 0 \\
Q & I%
\end{bmatrix}%
$. Such a matrix is in $\mathrm{Sp}(2n,q)$ just when $Q^{T}=Q$; that is, $Q$
is symmetric (\textquotedblleft $Q$\textquotedblright\ emphasizes the
associated quadratic form $Q(v)=vQv^{T}$ on $GF(q)^{n}$). Let $\boldsymbol{S}%
_{n}$ be the space of $n\times n$ symmetric matrices over $GF(q)$. There are
$q^{n(n+1)/2}$ such matrices.

The following theorem follows from results in \cite{GMST}, but we present a
proof done in the spirit of \cite{PW}.

\begin{theorem}
\label{TheoremGWeilfree}$G$ is \textbf{Weil-free}: the irreducible
constituents of $\omega |G$ appear with multiplicity 1.
\end{theorem}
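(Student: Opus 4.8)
The plan is to compute the inner product $\langle\omega|G,\,\omega|G\rangle_G$ and to show it equals $\omega(1)=q^{n}$. Since $g_{Q}g_{Q'}=g_{Q+Q'}$, the group $B$ is Abelian, and as $-I$ is central in $\mathrm{Sp}(2n,q)$ the whole of $G=\langle-I\rangle\times B$ is Abelian; thus every irreducible constituent of $\omega|G$ is linear. Writing $\omega|G=\sum_{i}m_{i}\lambda_{i}$ with the $\lambda_{i}$ the distinct linear characters actually occurring and $m_{i}\ge 1$, evaluation at the identity gives $\sum_{i}m_{i}=\omega(1)=q^{n}$ (the Weil representation has degree $q^{n}$), while $\langle\omega|G,\,\omega|G\rangle_G=\sum_{i}m_{i}^{2}$. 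Since $m_{i}^{2}\ge m_{i}$ with equality exactly when $m_{i}=1$, the restriction $\omega|G$ is Weil-free precisely when $\sum_{i}m_{i}^{2}=\sum_{i}m_{i}$, i.e.\ precisely when $\frac{1}{|G|}\sum_{g\in G}|\omega(g)|^{2}=q^{n}$. Here $|G|=2q^{n(n+1)/2}$.

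The key input is the identity $|\omega(g)|^{2}=\#\{v\in V:vg=v\}$, valid for every $g\in\mathrm{Sp}(2n,q)$; one reads this off the explicit values of $\omega$ recorded in~\cite{MWR}, or one invokes the classical fact that $\omega\otimes\bar\omega$ affords the permutation character of $\mathrm{Sp}(2n,q)$ acting linearly on $V$. Granting it, $\sum_{g\in G}|\omega(g)|^{2}=\sum_{g\in G}\#\{v\in V:vg=v\}$, which by the orbit-counting lemma equals $|G|$ times the number of $G$-orbits on $V$. So the theorem reduces to the purely combinatorial assertion that $G$ has exactly $q^{n}$ orbits on $V=GF(q)^{2n}$.

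That count is short. Writing $v=(x,y)$ one has $(x,y)g_{Q}=(x+yQ,\,y)$, so $B$ fixes the second coordinate, and the linear map $Q\mapsto yQ$ from $\boldsymbol{S}_{n}$ to $GF(q)^{n}$ is the zero map when $y=0$ and is surjective when $y\ne 0$ (after a change of basis it sends a symmetric matrix to its first row). Hence the $B$-orbits on $V$ are the $q^{n}$ singletons $\{(x,0)\}$ together with the $q^{n}-1$ ``slices'' $GF(q)^{n}\times\{y\}$ with $y\ne 0$. Since $q$ is odd, the central involution $-I$ interchanges $\{(x,0)\}$ with $\{(-x,0)\}$, fixing only $x=0$, and interchanges $GF(q)^{n}\times\{y\}$ with $GF(q)^{n}\times\{-y\}$; counting gives $1+\tfrac{1}{2}(q^{n}-1)+\tfrac{1}{2}(q^{n}-1)=q^{n}$ orbits of $G$ on $V$, which finishes the argument.

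The only step needing genuine care is the justification, from the data in~\cite{MWR}, of $|\omega(g)|^{2}=\#\{v\in V:vg=v\}$ for the elements of $G$: concretely this says $|\omega(g_{Q})|^{2}=q^{2n-\operatorname{rank}Q}$, a normalized Gauss-sum evaluation, and $|\omega(-g_{Q})|^{2}=1$, which holds because $-g_{Q}-I$ is invertible. With these two formulas in hand one may instead run the count directly, using the number of symmetric $n\times n$ matrices of each rank in place of the orbit-counting lemma; everything else is routine.
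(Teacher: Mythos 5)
Your proof is correct and follows essentially the same route as the paper: both arguments reduce Weil-freeness of the Abelian group $G$ to the assertion that $G$ has exactly $q^{n}$ orbits on $V$, which the paper invokes as the orbit criterion from \cite{PW} and which you re-derive from the permutation-character identity $|\omega(g)|^{2}=\#\{v\in V: vg=v\}$. The only cosmetic difference is that you identify the orbits explicitly (singletons on $y=0$ and paired slices on $y\neq 0$) while the paper counts them via Burnside's lemma from the stabilizer orders; both counts give $q^{n}$.
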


\begin{proof}
We use the \textbf{orbit criterion}: an Abelian subgroup $H$ of $\mathrm{Sp}%
(2n,q)$ is Weil-free exactly when the number of orbits of $H$ on $V$ is $%
q^{n}$ \cite{PW}. To count the orbits of $G$, recall that $(\#$ orbits$%
)=\left\vert G\right\vert ^{-1}\displaystyle\sum_{v\in V}\left\vert
G_{v}\right\vert $, $G_{v}$ the stabilizer of $v$. Notice that the matrices $%
-g_{Q}=%
\begin{bmatrix}
-I & 0 \\
-Q & -I%
\end{bmatrix}%
$ in $-B$ fix only $0$. Since $(x,y)^{g_{Q}}=(x,y)%
\begin{bmatrix}
I & 0 \\
Q & I%
\end{bmatrix}%
=(x+yQ,y)$, $v=(x,y)$ is fixed exactly when $yQ=0$. We look at three cases:

$v=(0,0)$: then $\left\vert G_{(0,0)}\right\vert =\left\vert G\right\vert
=2q^{n(n+1)/2}$.

$v=(x,0),x\neq 0$: all members of $B$ fix $v$ and $\left\vert
G_{(x,0)}\right\vert =q^{n(n+1)/2}$. There are $q^{n}-1$ such~$v$.

$v=(x,y),y\neq 0$: $v$ is fixed by the elements $g_{Q}$ with $yQ=0$. We can
set up such $Q$ by thinking of it as a quadratic form with $y$ in the
radical. Write $GF(q)^{n}=\left\langle y\right\rangle \oplus W$. Then $Q$
can be given by taking a form on $W$ and extending it by $0$ on $%
\left\langle y\right\rangle $. That gives $\left\vert G_{(x,y)}\right\vert
=q^{n(n-1)/2}$, the number of choices for the form on $W$. There are $%
q^{n}(q^{n}-1)$ of these~$v$.

\noindent Thus for $\displaystyle\sum_{v\in V}\left\vert G_{v}\right\vert $
we get%
\begin{eqnarray*}
\displaystyle\sum_{v\in V}\left\vert G_{v}\right\vert
&=&2q^{n(n+1)/2}+(q^{n}-1)\times q^{n(n+1)/2}+q^{n}(q^{n}-1)\times
q^{n(n-1)/2} \\
&=&q^{n(n+1)/2}\left\{ 2+q^{n}-1+q^{n}-1\right\} \\
&=&q^{n}\times 2q^{n(n+1)/2}.
\end{eqnarray*}%
So $\left\vert G\right\vert ^{-1}\displaystyle\sum_{v\in V}\left\vert
G_{v}\right\vert =q^{n}$, as needed.
\end{proof}

\section{Irreducible characters of $G$\label{SecIrredG}}

To describe the characters of $G$, let $\psi $ be the \textbf{canonical
additive character} of $GF(q)$, as used in \cite{SP} (the terminology is
that of \cite[p. 190]{LN}): $\psi (\alpha )=e^{(2\pi i/p)\mathrm{tr}(\alpha
)}$, where $\mathrm{tr}$ is the trace function $GF(q)\rightarrow GF(p)$, $p$
the prime dividing $q$. Each linear character of the additive group of $%
GF(q) $ is given by $\alpha \rightarrow \psi (\beta \alpha )$, $\beta \in
GF(q)$ \cite[Theorem 5.7]{LN} (this is equivalent to the nondegeneracy of
the trace form $(\alpha ,\beta )\rightarrow \mathrm{tr}(\alpha \beta )$). In
what follows, $\chi $ is the quadratic character on $GF(q)^{\#}$ (the
nonzero elements) and $\delta =\chi (-1)$. In \cite{SP}, $\rho $ was defined
as $\sum_{\alpha \in GF(q)}\psi (\alpha ^{2})$; we also have
\begin{equation}
\rho =\sum_{\beta \neq 0}\chi (\beta )\psi (\beta ),  \label{rho}
\end{equation}%
a Gaussian sum \cite[Chapter 5, Section 2]{LN}; $\rho ^{2}=\delta q$. If $%
Q\in \boldsymbol{S}_{n}$, diagonalize $Q$ and let
\begin{equation*}
\Delta (Q)=\chi (\text{product of nonzero diagonal entries of }Q).
\end{equation*}%
That is, if we write $GF(q)^{n}$ as $\mathrm{rad}(Q)\oplus W$, $\Delta (Q)$
is $\chi (\det (Q|W))$; $Q|W$ is the \textbf{nonsingular part} of $Q$. If $Q$
has even rank $2k$, we call $Q$ \textbf{hyperbolic} or \textbf{elliptic}
according as $Q|W$ is hyperbolic or elliptic. If $Q|W$ is hyperbolic, then $%
W $ is the orthogonal sum of hyperbolic planes, and $\det Q|W=(-1)^{k}$.
Thus $\Delta (Q)=\delta ^{k}$. If $Q|W$ is elliptic, then $\Delta
(Q)=-\delta ^{k}$.

\begin{lemma}
The irreducible characters of $B$ are the functions $\lambda _{S}$ given by%
\begin{equation*}
\lambda _{S}\left( g_{Q}\right) =\psi (\mathrm{Tr}(SQ)),
\end{equation*}%
where $S\in \boldsymbol{S}_{n}$ and $\mathrm{Tr}$ is the matrix trace. Each $%
\lambda _{S}$ extends to two irreducible characters $\lambda _{S}^{\pm }$ of
$G$ by the formula%
\begin{eqnarray}
\lambda _{S}^{\pm }\left( g_{Q}\right) &=&\psi (\mathrm{Tr}(SQ))
\label{lambdavalues} \\
\lambda _{S}^{\pm }\left( -g_{Q}\right) &=&\pm \psi (\mathrm{Tr}(SQ)),
\notag
\end{eqnarray}%
with the signs in the last equation matching on the two sides.
\end{lemma}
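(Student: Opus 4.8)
The plan is to exploit the fact that $B$ is abelian together with an explicit parametrization of its linear characters coming from the nondegeneracy of the trace form on matrices. First I would note that the map $Q \mapsto g_Q$ is an isomorphism from the additive group $\boldsymbol{S}_n$ onto $B$, since $g_{Q_1}g_{Q_2} = g_{Q_1+Q_2}$; hence $B$ is abelian of order $q^{n(n+1)/2}$ and its irreducible characters are all linear, in bijection with the characters of $(\boldsymbol{S}_n,+)$. The character group of $(\boldsymbol{S}_n,+)$ is identified via the pairing $(S,Q)\mapsto \psi(\mathrm{Tr}(SQ))$: this pairing is $GF(q)$-bilinear, and it is nondegenerate because $\mathrm{Tr}(SQ)$ as $Q$ ranges over symmetric matrices already sees every entry of the symmetric matrix $S$ (for instance $\mathrm{Tr}(SE_{ii})=S_{ii}$ and $\mathrm{Tr}(S(E_{ij}+E_{ji}))=2S_{ij}$, and $2\neq 0$ since $q$ is odd), combined with the nondegeneracy of the trace form of $GF(q)$ recalled in the text. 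So the $q^{n(n+1)/2}$ functions $\lambda_S(g_Q)=\psi(\mathrm{Tr}(SQ))$ are distinct homomorphisms $B\to\mathbb{C}^\times$, and by counting they exhaust $\widehat B$.

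For the extension to $G = \langle -I\rangle \times B$, I would use the direct product structure directly: since $-I$ has order $2$ and generates a complement to $B$, every irreducible character of $G$ is of the form $\epsilon \otimes \lambda$ with $\epsilon$ a character of $\langle -I\rangle$ (so $\epsilon(-I)=\pm 1$) and $\lambda\in\widehat B$. Writing $-g_Q = (-I)\cdot g_Q$ in the product $\langle -I\rangle\times B$, one gets exactly the two characters $\lambda_S^{\pm}$ with $\lambda_S^{\pm}(g_Q)=\psi(\mathrm{Tr}(SQ))$ and $\lambda_S^{\pm}(-g_Q)=\pm\psi(\mathrm{Tr}(SQ))$, matching the displayed formula, and these are all $2q^{n(n+1)/2} = |G|$ of them.

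There is essentially no hard step here; the only point requiring a little care is checking that $-I$ really does commute with each $g_Q$ and that $\langle -I\rangle\cap B = \{I\}$, so that $G$ is genuinely an internal direct product and the decomposition $\widehat G = \widehat{\langle -I\rangle}\times\widehat B$ applies — but this is immediate since $-I$ is central in $GL(2n,q)$ and $-I\notin B$ (the bottom-right block of $-g_Q$ is $-I\neq I$ unless $q$ is even, which is excluded). If the paper prefers to avoid citing the general product formula, one can instead verify directly that the functions in \eqref{lambdavalues} are homomorphisms using $(-g_{Q_1})(-g_{Q_2}) = g_{Q_1+Q_2}$ and $(-g_{Q_1})(g_{Q_2}) = -g_{Q_1+Q_2}$, observe they are pairwise distinct (two with the same $S$ differ on $-I$, two with different $S$ differ on some $g_Q$ by the nondegeneracy above), and conclude by the count $|{\widehat G}| = |G| = 2q^{n(n+1)/2}$.
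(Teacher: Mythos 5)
Your proposal is correct and follows essentially the same route as the paper: establish nondegeneracy of the pairing $(S,Q)\mapsto\mathrm{Tr}(SQ)$ on $\boldsymbol{S}_n$ (the paper diagonalizes $S$ and tests against a single matrix unit, while you read off $S_{ii}$ and $2S_{ij}$ directly, using that $q$ is odd), then conclude by counting and use the direct product $G=\langle -I\rangle\times B$ for the extension. No gaps.
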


\begin{proof}
That this formula does give all the linear characters of $B$ follows from
the fact that the trace form $(S,Q)\rightarrow \mathrm{Tr}(SQ)$ on $%
\boldsymbol{S}_{n}$ is nondegenerate. That, in turn, can be seen as follows:
suppose that $\mathrm{Tr}(SQ)=0$ for all $Q\in \boldsymbol{S}_{n}$. Take a
basis for for $GF(q)^{n}$ that makes $S$ diagonal. Suppose that $\zeta $ is
a nonzero diagonal entry of $S$. Choose $Q$ to have 1 at that position and 0
elsewhere. Then $\mathrm{Tr}(SQ)=\zeta $; so $\zeta =0$ after all. Thus $S=0$%
. So these characters $\lambda _{S}$ are all distinct; and since there is
the correct number of them, they give all the characters of $B$. Then for
the characters of $G$, we use the direct product decomposition $%
G=\left\langle -I\right\rangle \times B$ to write them as claimed.
\end{proof}

\section{Values of $\protect\omega $ on $G$\label{SecwVal}}

For the values of $\omega $, we use results from \cite{MWR}. A member $%
-g_{Q} $ of $-B$ has $-g_{Q}-1$ invertible, with diagonal entries $-2$, and
\cite[Section 6.5]{MWR} gives%
\begin{equation}
\omega \left( -g_{Q}\right) =\delta ^{n}\chi (\det (-g_{Q}-1))=\delta
^{n}\chi ((-2)^{2n})=\delta ^{n}.  \label{w(-1)}
\end{equation}%
As for $g_{Q}$, \cite[Theorem 6.7]{MWR} implies that $\omega (g)=q^{n}\rho
^{-\dim V^{g-1}}\chi (\det \Theta _{g}).$ Here $\Theta _{g}$ is given in
\cite[Definition 3.3]{MWR}: it is the form defined on $V^{g-1}$ by $\Theta
_{g}(u^{g-1},v^{g-1})=\varphi (u^{g-1},v)$. We have $(x,y)^{g-1}=(yQ,0)$. So
\begin{eqnarray*}
\Theta _{g}((x_{1},y_{1})^{g-1},(x_{2},y_{2})^{g-1}) &=&\varphi
((x_{1},y_{1})^{g-1},(x_{2},y_{2})) \\
&=&\varphi ((y_{1}Q,0),(x_{2},y_{2})) \\
&=&y_{1}Qy_{2}^{T}.
\end{eqnarray*}%
It follows that $\chi (\det \Theta _{g})=\Delta (Q)$. Thus

\begin{proposition}
The values of $\omega $ on $G$ are given by%
\begin{eqnarray}
\omega \left( g_{Q}\right) &=&q^{n}\rho ^{-\mathrm{rank}Q}\Delta (Q)
\label{w(1)} \\
\omega \left( -g_{Q}\right) &=&\delta ^{n}.
\end{eqnarray}%
If $Q=0$, $\Delta (Q)$ is artificially taken to be $1$.
\end{proposition}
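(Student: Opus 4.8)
The plan is to verify the two displayed formulas by assembling the pieces already established in the excerpt, with essentially no new ideas required. The second formula, $\omega(-g_Q)=\delta^n$, is immediate: equation~(\ref{w(-1)}) records precisely that $\omega(-g_Q)=\delta^n\chi(\det(-g_Q-1))$, and the chain of equalities there evaluates $\det(-g_Q-1)$ by noting that $-g_Q-1$ is upper/lower triangular with all $2n$ diagonal entries equal to $-2$, so its determinant is $(-2)^{2n}$, a square, whence $\chi$ of it is $1$. So nothing remains to be done there beyond citing~(\ref{w(-1)}).

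For the first formula, I would proceed in the order the excerpt itself lays out. First invoke \cite[Theorem 6.7]{MWR} in the form $\omega(g)=q^n\rho^{-\dim V^{g-1}}\chi(\det\Theta_g)$ for $g=g_Q$. Next identify $V^{g-1}$: from $(x,y)^{g-1}=(yQ,0)$ one reads off that $V^{g-1}=\{(yQ,0):y\in GF(q)^n\}$, which has dimension equal to $\mathrm{rank}\,Q$; hence $\dim V^{g-1}=\mathrm{rank}\,Q$ and the exponent of $\rho$ is $-\mathrm{rank}\,Q$ as claimed. Then handle the $\Theta_g$ factor: the computation already carried out in the excerpt shows $\Theta_g((x_1,y_1)^{g-1},(x_2,y_2)^{g-1})=y_1Qy_2^T$, so under the parametrization $u^{g-1}=y_1Q$, $v^{g-1}=y_2Q$ the form $\Theta_g$ on $V^{g-1}$ is carried to the form $(y_1Q,y_2Q)\mapsto y_1Qy_2^T$, i.e.\ to the nonsingular part of the quadratic form attached to $Q$. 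Consequently $\chi(\det\Theta_g)=\chi(\det(Q|W))=\Delta(Q)$ by the definition of $\Delta$ given just before the Lemma on characters of $B$. Substituting these two evaluations into the formula from \cite{MWR} yields $\omega(g_Q)=q^n\rho^{-\mathrm{rank}\,Q}\Delta(Q)$.

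The one point deserving a sentence of care — and the only place a reader might stumble — is the well-definedness of $\chi(\det\Theta_g)$ and its identification with $\Delta(Q)$. The matrix of $\Theta_g$ with respect to a chosen basis of $V^{g-1}$ is only determined up to congruence, so $\det\Theta_g$ is determined only up to a nonzero square; this is exactly why the relevant invariant is $\chi(\det\Theta_g)$ rather than $\det\Theta_g$ itself, and $\chi$ kills the square ambiguity. That $\Theta_g$ is nondegenerate on $V^{g-1}$ (so that $\det\Theta_g\neq 0$ and $\chi$ applies) is part of the general setup in \cite[Definition 3.3]{MWR}; concretely here it follows because $y\mapsto yQ$ has kernel $\mathrm{rad}(Q)$, so on the quotient $GF(q)^n/\mathrm{rad}(Q)\cong V^{g-1}$ the form $y_1Qy_2^T$ is exactly the nondegenerate form $Q|W$. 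Matching determinants through the diagonalization of $Q$ then gives $\chi(\det(Q|W))=\Delta(Q)$ directly from the definition.

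Finally, the degenerate case $Q=0$: then $g_Q$ is the identity, $V^{g-1}=0$, $\mathrm{rank}\,Q=0$, and $\omega(g_Q)=\omega(I)=q^n$; the formula reproduces this provided we read $\rho^{0}=1$ and interpret $\Delta(0)=1$ (the empty product of nonzero diagonal entries), which is the stated convention. I do not anticipate any genuine obstacle: the proposition is a direct transcription of \cite[Theorems 6.7 and Section 6.5]{MWR} into the present matrix coordinates, and all the needed local computations ($(x,y)^{g-1}=(yQ,0)$, the evaluation of $\Theta_g$, and the determinant of $-g_Q-1$) have already been performed in the paragraphs preceding the statement.
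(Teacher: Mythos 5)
Your proposal is correct and follows essentially the same route as the paper: citing \cite[Section 6.5]{MWR} for $\omega(-g_Q)=\delta^n$ and \cite[Theorem 6.7]{MWR} together with the computation $(x,y)^{g-1}=(yQ,0)$, $\Theta_g=y_1Qy_2^T$ to get $\omega(g_Q)=q^n\rho^{-\mathrm{rank}\,Q}\Delta(Q)$. Your added remarks on the well-definedness of $\chi(\det\Theta_g)$ and the $Q=0$ convention are sound but not needed beyond what the paper already records.
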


\section{Character multiplicities in $\protect\omega |G$\label{SecCharMult}}

The multiplicity (which is 0 or 1) of a linear character $\lambda $ in $%
\omega |G$ is $\left\vert G\right\vert ^{-1}\sum_{g\in G}\omega (g)\overline{%
\lambda (g)}.$ With $\lambda =\lambda _{S}^{\pm }$, we get%
\begin{equation*}
\sum_{g\in G}\omega (g)\overline{\lambda _{S}^{\pm }(g)}=\sum_{Q\in
\boldsymbol{S}_{n}}q^{n}\rho ^{-\mathrm{rank}Q}\Delta (Q)\psi (-\mathrm{Tr}%
(SQ))\pm \sum_{Q\in \boldsymbol{S}_{n}}\delta ^{n}\psi (-\mathrm{Tr}(SQ))
\end{equation*}%
(again, the sign on the right matches the sign in $\lambda _{S}^{\pm }$).
The second summation is 0 if $S\neq 0$, and $\delta ^{n}q^{n(n+1)/2}$ if $%
S=0 $.

For further computations, we need some standard group-order formulas. They
are taken from \cite{T}.%
\begin{equation}
\begin{tabular}{cc}
Group & Order \\
$GL(l,q)$ & $q^{l(l-1)/2}\prod\limits_{i=1}^{l}(q^{i}-1)$ \\
$O^{+}(2k,q)$ & $2q^{k(k-1)}(q^{k}-1)\prod\limits_{i=1}^{k-1}(q^{2i}-1)$ \\
$O^{-}(2k,q)$ & $2q^{k(k-1)}(q^{k}+1)\prod\limits_{i=1}^{k-1}(q^{2i}-1)$ \\
$O(2k+1,q)$ & $2q^{k^{2}}\prod\limits_{i=1}^{k}(q^{2i}-1).$%
\end{tabular}
\label{orders}
\end{equation}%
We also need the $q$-binomial coefficient 
${n \brack r}_{q}$ that gives the number of $r$-dimensional subspaces of $%
GF(q)^{n}$. By duality, 
${n \brack r}_{q}={n \brack n-r}_{q}$.

\subsection{$S=0$}

When $S=0$,%
\begin{equation*}
(\omega ,\lambda _{0}^{\pm })_{G}=\frac{1}{2q^{n(n-1)/2}}\sum_{Q\in
\boldsymbol{S}_{n}}\rho ^{-\mathrm{rank}Q}\Delta (Q)\pm \frac{\delta ^{n}}{2}%
,
\end{equation*}%
since $\left\vert G\right\vert =2q^{n(n+1)/2}$. We conclude that the first
term must be $1/2$:%
\begin{equation}
\sum_{Q\in \boldsymbol{S}_{n}}\rho ^{-\mathrm{rank}Q}\Delta (Q)=q^{n(n-1)/2}.
\label{sum1}
\end{equation}%
We shall elaborate on this in Section \ref{Secqbinom}. Thus we have

\begin{proposition}
The multiplicity of $\lambda _{0}^{\pm }$ in $\omega $ is%
\begin{equation*}
(\omega ,\lambda _{0}^{\pm })_{G}=\frac{1\pm \delta ^{n}}{2}.
\end{equation*}
\end{proposition}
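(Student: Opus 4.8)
The plan is to combine the formula for the multiplicity $(\omega,\lambda_0^{\pm})_G$ worked out just above with the identity~\eqref{sum1}. Indeed, the displayed expression in the previous paragraph reads
\[
(\omega ,\lambda _{0}^{\pm })_{G}=\frac{1}{2q^{n(n-1)/2}}\sum_{Q\in \boldsymbol{S}_{n}}\rho ^{-\mathrm{rank}Q}\Delta (Q)\pm \frac{\delta ^{n}}{2},
\]
so once we know that $\sum_{Q\in \boldsymbol{S}_{n}}\rho ^{-\mathrm{rank}Q}\Delta (Q)=q^{n(n-1)/2}$, the first term collapses to $1/2$ and the result $(\omega ,\lambda _{0}^{\pm })_{G}=(1\pm\delta^n)/2$ follows immediately.

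So the real content is the identity~\eqref{sum1}. The clean way to see it is a consistency argument: $(\omega,\lambda_0^+)_G$ and $(\omega,\lambda_0^-)_G$ are \emph{a priori} each either $0$ or $1$, being multiplicities of linear characters in the Weil-free restriction $\omega|G$ (Theorem~\ref{TheoremGWeilfree}). Write $T=q^{-n(n-1)/2}\sum_{Q}\rho^{-\mathrm{rank}Q}\Delta(Q)$, so the two multiplicities are $\tfrac12(T+\delta^n)$ and $\tfrac12(T-\delta^n)$. Their sum is $T$, an integer in $\{0,1,2\}$; and since $\lambda_0^+$ and $\lambda_0^-$ agree on $B$ and $\omega|B$ contains the trivial character of $B$ with multiplicity exactly $1$ (again by Weil-freeness, or directly from $(\omega,\lambda_0)_B=|B|^{-1}\sum_Q q^n\rho^{-\mathrm{rank}Q}\Delta(Q)$), exactly one of $\lambda_0^{\pm}$ occurs. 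Hence $T=1$, which is precisely~\eqref{sum1}, and then the signs sort themselves out: $\delta^n=\chi(-1)^n$ is $\pm1$, so $(1\pm\delta^n)/2\in\{0,1\}$ with the right one equal to $1$.

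The main obstacle is making the "consistency" step rigorous without circularity: one must be sure that $T$ really is forced to be a nonnegative integer and that at least one (and hence, since they differ by the nonzero quantity $\delta^n$, exactly one) of $\lambda_0^{\pm}$ is a constituent. The first point is handled by the general fact that $(\omega,\lambda)_G\in\mathbb{Z}_{\geq0}$ for any character $\lambda$; the second by noting that the restriction of $\omega$ to $B$ must contain \emph{some} extension-pair's common restriction, and the trivial character $\lambda_0$ of $B$ is forced in because $\langle \omega|B,1_B\rangle = q^{-n(n+1)/2}\sum_{Q\in\boldsymbol S_n} q^n\rho^{-\mathrm{rank}Q}\Delta(Q) = q^{-n(n-1)/2}\sum_Q \rho^{-\mathrm{rank}Q}\Delta(Q)=T$ is itself a nonnegative integer, and it cannot be $0$ (else neither $\lambda_0^+$ nor $\lambda_0^-$ would appear, yet the sum $T=0$ would also force a contradiction once one checks, e.g. via the value $\omega(1)=q^n$ and positivity of the remaining constituents, that $T\ge1$). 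Alternatively, and more in the spirit of Section~\ref{Secqbinom} promised in the text, one can evaluate $\sum_{Q}\rho^{-\mathrm{rank}Q}\Delta(Q)$ directly by stratifying $\boldsymbol S_n$ according to the rank $r$ and type of $Q$, using the orbit counts of $GL(n,q)$ on symmetric matrices of each rank and type together with the group orders in~\eqref{orders} and the $q$-binomial coefficients; that computation, though longer, avoids any appeal to representation theory and yields~\eqref{sum1} outright. Either route then delivers the proposition in one line.
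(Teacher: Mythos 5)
Your argument is correct and is essentially the paper's own proof: the paper likewise notes that each multiplicity $(\omega ,\lambda _{0}^{\pm })_{G}=\tfrac12\bigl(T\pm \delta ^{n}\bigr)$, with $T=q^{-n(n-1)/2}\sum_{Q\in \boldsymbol{S}_{n}}\rho ^{-\mathrm{rank}Q}\Delta (Q)$, lies in $\{0,1\}$ by Theorem~\ref{TheoremGWeilfree}, and since the two values differ by $\delta ^{n}=\pm 1$ exactly one of them is $1$, forcing $T=1$, which is (\ref{sum1}). One caveat: your intermediate assertion that $\lambda _{0}$ occurs in $\omega |B$ with multiplicity exactly $1$ ``by Weil-freeness'' is not justified ($B$ itself is not Weil-free, and that multiplicity is just $T$ again), but it is also unnecessary, since the difference-by-$\pm 1$ argument you already give pins down $T=1$ on its own.
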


\subsection{$S\neq 0$}

Here%
\begin{equation*}
\sum_{g\in G}\omega (g)\overline{\lambda _{S}^{\pm }(g)}=\sum_{Q\in
\boldsymbol{S}_{n}}q^{n}\rho ^{-\mathrm{rank}Q}\Delta (Q)\psi (-\mathrm{Tr}%
(SQ)).
\end{equation*}%
It follows that $\lambda _{S}^{+}$ and $\lambda _{S}^{-}$ appear with the
same multiplicity in $\omega |G$. Now note that $\mathrm{Tr}(M^{T}SMQ)=%
\mathrm{Tr}(SMQM^{T})$. So with $M$ nonsingular, $\lambda _{S}^{\pm }$ and $%
\lambda _{M^{T}SM}^{\pm }$ also have the same multiplicity in $\omega |G$.
Suppose that $\mathrm{\mathrm{rank}}S=r$. The number of members of $%
\boldsymbol{S}_{n}$ congruent to $S$ (that is, of the form $M^{T}SM$) is%
\begin{equation*}
{n \brack n-r}_{q} \times \frac{\left\vert GL(r,q)\right\vert }{\left\vert
O(S)\right\vert }= {n \brack r}_{q} \times \frac{\left\vert
GL(r,q)\right\vert }{\left\vert O(S)\right\vert },
\end{equation*}%
where $O(S)$ is the orthogonal group for $S$. We want to show that if $r>1$,
then this number is more than $(q^{n}-1)/2$. That will imply that the only $%
S\neq 0$ that can appear in the characters in $\omega |G$ are the ones of
rank 1; there are $q^{n}-1$ of these \cite[Theorem 13.2.47]{HFF}. Again we
separate by parity.

\begin{itemize}
\item $r=2k$: Then%
\begin{equation*}
\left\vert O(S)\right\vert \leq \left\vert O^{-}(2k,q)\right\vert
=2q^{k(k-1)}(q^{k}+1)\prod\limits_{i=1}^{k-1}(q^{2i}-1),
\end{equation*}%
from (\ref{orders})\newline
So%
\begin{eqnarray*}
{n \brack r}_{q} \times \frac{\left\vert GL(r,q)\right\vert }{\left\vert
O(S)\right\vert } &\geq & {n \brack 2k}_{q} \times \frac{\left\vert
GL(2k,q)\right\vert }{\left\vert O^{-}(2k,q)\right\vert } \\
&=&\frac{\prod\limits_{j=n-2k+1}^{n}(q^{j}-1)}{\prod%
\limits_{j=1}^{2k}(q^{j}-1)}\times \frac{q^{k(2k-1)}\prod%
\limits_{j=1}^{2k}(q^{j}-1)}{2q^{k(k-1)}(q^{k}+1)\prod%
\limits_{i=1}^{k-1}(q^{2i}-1)} \\
&=&\frac{\prod\limits_{j=n-2k+1}^{n}(q^{j}-1)}{2(q^{k}+1)\prod%
\limits_{i=1}^{k-1}(q^{2i}-1)}\times q^{k^{2}} \\
&\geq &\frac{\prod\limits_{j=n-2k+1}^{n}(q^{j}-1)}{2(q^{k}+1)\prod%
\limits_{i=1}^{k-1}q^{2i}}\times q^{k^{2}} \\
&=&\frac{\prod\limits_{j=n-2k+1}^{n}(q^{j}-1)}{2(q^{k}+1)}\times q^{k}.
\end{eqnarray*}%
We would like this to be more than $(q^{n}-1)/2$, that is,%
\begin{equation*}
\frac{\prod\limits_{j=n-2k+1}^{n}(q^{j}-1)}{2(q^{k}+1)}\times q^{k}>\frac{%
q^{n}-1}{2}.
\end{equation*}%
We need%
\begin{equation*}
\prod\limits_{j=n-2k+1}^{n-1}(q^{j}-1)>1+\frac{1}{q^{k}}.
\end{equation*}%
Since $n-1\geq n-2k+1$, from $k\geq 1$, and $n-2k+1\geq 1$, the product is
nonempty and its smallest factor is at least 2. Thus the inequality holds.

\item $r=2k+1$: This time, again with the appropriate formula from (\ref%
{orders}) filled in,%
\begin{eqnarray*}
{n \brack r}_{q} \times \frac{\left\vert GL(r,q)\right\vert }{\left\vert
O(S)\right\vert } &=&\frac{\prod\limits_{j=n-2k}^{n}(q^{j}-1)}{%
\prod\limits_{j=1}^{2k+1}(q^{j}-1)}\times \frac{q^{k(2k+1)}\prod%
\limits_{j=1}^{2k+1}(q^{j}-1)}{2q^{k^{2}}\prod\limits_{j=1}^{k}(q^{2j}-1)} \\
&=&\frac{\prod\limits_{j=n-2k}^{n}(q^{j}-1)}{2\prod%
\limits_{j=1}^{k}(q^{2j}-1)}\times q^{k^{2}+k} \\
&\geq &\frac{\prod\limits_{j=n-2k}^{n}(q^{j}-1)}{2\prod%
\limits_{j=1}^{k}q^{2j}}\times q^{k^{2}+k} \\
&=&\frac{\prod\limits_{j=n-2k}^{n}(q^{j}-1)}{2}.
\end{eqnarray*}%
We would also like this to be greater than $(q^{n}-1)/2$, and as long as $%
k>0 $, it is.
\end{itemize}

\section{The decomposition of $\protect\omega |G$\label{SecDecomp}}

Collecting the results of the preceding section gives

\begin{equation*}
\omega |G=\frac{1+\delta ^{n}}{2}\lambda _{0}^{+}+\frac{1-\delta ^{n}}{2}%
\lambda _{0}^{-}+\sum_{\mathrm{\mathrm{rank}}S=1}\left( \frac{1}{%
2q^{n(n-1)/2}}\sum_{Q\in \boldsymbol{S}_{n}}\rho ^{-\mathrm{rank}Q}\Delta
(Q)\psi (-\mathrm{Tr}(SQ))\right) (\lambda _{S}^{+}+\lambda _{S}^{-}).
\end{equation*}%
We still need to determine which congruence class of symmetric matrices $S$
of rank 1 actually appears in the decomposition. (There are two such
classes, corresponding to $\Delta (S)=1$ and $\Delta (S)=-1$. Each class has
$(q^{n}-1)/2$ members.) To do so, we examine the characters on a small
subgroup of $G$.

Let $M$ be the $n\times n$ symmetric matrix with $M_{11}=1$ and all other
entries $0$; $\Delta (M)=1$. Let $H$ be the subgroup consisting of the
matrices $h_{\alpha }=%
\begin{bmatrix}
I & 0 \\
\alpha M & I%
\end{bmatrix}%
$, $\alpha \in GF(q)$. Then $\omega (h_{\alpha })=q^{n}$ if $\alpha =0$, and
$\omega (h_{\alpha })=q^{n}\rho ^{-1}\chi (\alpha )$ if $\alpha \neq 0$, by (%
\ref{w(1)}). Moreover, $\lambda _{\beta M}^{\pm }(h_{\alpha })=\psi
(\alpha\beta ) $, for $\beta \neq 0$. It follows that%
\begin{eqnarray*}
(\omega ,\lambda _{\beta M}^{\pm })_{H} &=&q^{-1}\left\{ q^{n}+\sum_{\alpha
\neq 0}q^{n}\rho ^{-1}\chi (\alpha )\psi (-\alpha \beta )\right\} \\
&=&q^{n-1}\left\{ 1+\delta \rho ^{-1}\chi (\beta )\sum_{\alpha \neq 0}\chi
(-\beta \alpha )\psi (-\beta \alpha )\right\} \\
&=&q^{n-1}(1+\chi (\beta )\delta ),
\end{eqnarray*}%
by (\ref{rho}). So $\lambda _{\beta M}^{\pm }$ appears in $\omega |H$ just
when $\chi (\beta )=\delta $. This implies the following:

\begin{theorem}
\label{TheoremMain}
\begin{equation}
\omega |G=\frac{1+\delta ^{n}}{2}\lambda _{0}^{+}+\frac{1-\delta ^{n}}{2}%
\lambda _{0}^{-}+\sum_{\substack{ \mathrm{\mathrm{rank}}S=1  \\ \Delta
(S)=\delta }}(\lambda _{S}^{+}+\lambda _{S}^{-}).  \label{w|G}
\end{equation}%
In particular,%
\begin{equation}
\omega |B=\lambda _{0}+2\sum_{\substack{ \mathrm{\mathrm{rank}}S=1  \\ %
\Delta (S)=\delta }}\lambda _{S}.  \label{w|B}
\end{equation}
\end{theorem}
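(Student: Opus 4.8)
The plan is to assemble the pieces already in place. From Section~\ref{SecCharMult} we know the multiplicity of $\lambda_0^\pm$ in $\omega|G$ is $(1\pm\delta^n)/2$, and we know that the only $S\neq 0$ contributing to $\omega|G$ have $\mathrm{rank}\,S=1$, with $\lambda_S^+$ and $\lambda_S^-$ sharing a common multiplicity $m(S)$ that depends only on the congruence class of $S$ (via $S\mapsto M^TSM$). So the decomposition takes the form displayed just before the theorem, and the only remaining unknowns are: (i) which of the two rank-$1$ classes appears, and (ii) with what multiplicity. The $H$-computation just given shows $\lambda_{\beta M}^\pm$ occurs in $\omega|H$ precisely when $\chi(\beta)=\delta$; since $\Delta(\beta M)=\chi(\beta)$, this says exactly that the class with $\Delta(S)=\delta$ is the one that can occur, and the class with $\Delta(S)=-\delta$ cannot occur in $\omega|G$ either (restriction to $H$ can only lose constituents, not gain them, and every rank-$1$ $S$ with $\Delta(S)=-\delta$ is conjugate under $GL(n,q)$ to $\beta M$ with $\chi(\beta)=-\delta$). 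That settles (i).

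For (ii), the cleanest route is a dimension count rather than re-evaluating the character sum $\sum_Q \rho^{-\mathrm{rank}\,Q}\Delta(Q)\psi(-\mathrm{Tr}(SQ))$ directly. We have $\omega(1)=\omega(g_0)=q^n$ by \eqref{w(1)}, so the total multiplicity-weighted count of linear characters in $\omega|G$ must sum to $q^n$. The $\lambda_0$ part contributes $\frac{1+\delta^n}{2}+\frac{1-\delta^n}{2}=1$. If the common multiplicity of each $\lambda_S^\pm$ with $\mathrm{rank}\,S=1$, $\Delta(S)=\delta$ is $m$, then the rank-$1$ part contributes $2m\cdot\#\{S:\mathrm{rank}\,S=1,\Delta(S)=\delta\}=2m\cdot(q^n-1)/2=m(q^n-1)$. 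Since $\omega|G$ is Weil-free by Theorem~\ref{TheoremGWeilfree}, all multiplicities are $0$ or $1$, so $m\in\{0,1\}$; and $1+m(q^n-1)=q^n$ forces $m=1$. This gives \eqref{w|G}.

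Finally, \eqref{w|B} follows from \eqref{w|G} by restricting from $G=\langle -I\rangle\times B$ to $B$: each pair $\lambda_S^++\lambda_S^-$ restricts to $2\lambda_S$ by the Lemma in Section~\ref{SecIrredG} (the formula \eqref{lambdavalues} shows $\lambda_S^+|B=\lambda_S^-|B=\lambda_S$), and $\frac{1+\delta^n}{2}\lambda_0^++\frac{1-\delta^n}{2}\lambda_0^-$ restricts to $\left(\frac{1+\delta^n}{2}+\frac{1-\delta^n}{2}\right)\lambda_0=\lambda_0$.

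The only genuine obstacle is part (i): one must be sure that the $H$-restriction argument actually eliminates the class $\Delta(S)=-\delta$ rather than merely confirming that $\Delta(S)=\delta$ can occur. This is handled by the observation that restriction cannot create constituents, together with the fact that $H$ was chosen with $\Delta(M)=1$ so that $\{\beta M:\beta\neq 0\}$ meets both congruence classes; hence a rank-$1$ $S$ with $\Delta(S)=-\delta$ is $GL(n,q)$-conjugate to some $\beta M$ with $\chi(\beta)=-\delta$, and if $\lambda_S^\pm$ occurred in $\omega|G$ it would have the same multiplicity as $\lambda_{\beta M}^\pm$, which is $0$ in $\omega|H$ and hence $0$ in $\omega|G$. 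Everything else is bookkeeping with the orders in \eqref{orders} already carried out in the previous section, plus the elementary dimension count above.
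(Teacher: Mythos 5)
Your proposal is correct and follows essentially the same route as the paper: collect the multiplicity results of the preceding section, use the restriction to the one-parameter subgroup $H$ generated by the $h_\alpha$ to eliminate the rank-one class with $\Delta(S)=-\delta$, and then restrict to $B$. The only difference is that you make explicit two steps the paper leaves implicit --- that restriction to $H$ cannot create constituents, and the degree count $1+m(q^{n}-1)=q^{n}$ forcing $m=1$ --- both of which are handled correctly.
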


The Weil character is the sum of two irreducible characters, $\omega _{+}$,
of degree $(q^{n}+1)/2$, and $\omega _{-}$, of degree $(q^{n}-1)/2$. Their
values at $-I$ are $\omega _{\pm }(-I)=\pm \delta ^{n}(q^{n}\pm 1)/2$ \cite[%
Section 6]{MWR}, the signs all matching. Observing the eigenvalues of $-I$
in the corresponding representations, we can write that%
\begin{eqnarray*}
\omega _{+}|G &=&\frac{1+\delta ^{n}}{2}\lambda _{0}^{+}+\frac{1-\delta ^{n}%
}{2}\lambda _{0}^{-}+\sum_{\substack{ \mathrm{\mathrm{rank}}S=1  \\ \Delta
(S)=\delta }}\lambda _{S}^{\delta ^{n}} \\
\omega _{-}|G &=&\sum_{\substack{ \mathrm{\mathrm{rank}}S=1  \\ \Delta
(S)=\delta }}\lambda _{S}^{-\delta ^{n}}.
\end{eqnarray*}%
As an immediate consequence we get that both $\omega _{+}|B$ and $\omega
_{-}|B$ are multiplicity free.

\section{Confirmation of the $\protect\omega |B$ decomposition\label%
{ConfDecomp}}

Recall that $g_{Q}=%
\begin{bmatrix}
I & 0 \\
Q & I%
\end{bmatrix}%
$. By (\ref{w(1)}), $\omega (g_{Q})=q^{n}\rho ^{-\mathrm{\mathrm{rank}}%
Q}\Delta (Q)$. Let $\mathrm{rank}Q=r$. Then (\ref{w|B}) gives%
\begin{equation}
\omega (g_{Q})=q^{n}\rho ^{-r}\Delta (Q)=1+2\sum_{\substack{ \mathrm{\mathrm{%
rank}}S=1  \\ \Delta (S)=\delta }}\psi (\mathrm{Tr}(SQ)).  \label{wgQ}
\end{equation}%
If $x\in GF(q)^{n}$, $x\neq 0$, then $x^{T}x$ is a rank 1 symmetric matrix.
Two such products $x^{T}x$ and $y^{T}y$ are equal just when $y=\pm x$. For $%
x=(\xi _{1},\ldots ,\xi _{n})$, the diagonal entries of $x^{T}x$ are the $%
\xi _{i}^{2}$. Thus since at least one is nonzero, $\Delta (x^{T}x)=1$. So
all symmetric $n\times n$ rank 1 matrices can be written as $x^{T}x$ ($x\neq
0$) or $\nu x^{T}x$, where $\nu $ is a fixed nonsquare in $GF(q)$. (As
mentioned above, there are $(q^{n}-1)/2$ matrices of each type.) We can
rewrite (\ref{wgQ}) as follows:%
\begin{equation}
\omega (g_{Q})=q^{n}\rho ^{-r}\Delta (Q)=\sum_{x}\left\{ \frac{1+\delta }{2}%
\psi (\mathrm{Tr}(x^{T}xQ))+\frac{1-\delta }{2}\psi (\mathrm{Tr}(\nu
x^{T}xQ))\right\} .  \label{wgQx}
\end{equation}%
The factors $(1\pm \delta )/2$ pick out the $S$ with $\Delta (S)=\delta $
and adjust for the fact that each $S$ appears twice; the 1 on the right in (%
\ref{wgQ}) comes from $x=0$. (Recall also that we have set $\Delta (0)=1$.)

Now%
\begin{equation*}
\mathrm{Tr}(x^{T}xQ)=\mathrm{Tr}(xQx^{T})=\mathrm{Tr}(Q(x))=Q(x),
\end{equation*}%
so the preceding formula becomes%
\begin{equation}
q^{n}\rho ^{-r}\Delta (Q)=\sum_{x}\left\{ \frac{1+\delta }{2}\psi (Q(x))+%
\frac{1-\delta }{2}\psi (\nu Q(x))\right\} .  \label{sig}
\end{equation}%
Let $\sigma $ denote the sum. To evaluate $\sigma $ we need the number of
times $Q(x)=\alpha $ for $\alpha \in GF(q)$. These counts for nonzero $%
\alpha $ depend only on whether $\alpha $ is a square or not, since $Q(\beta
x)=\beta ^{2}Q(x)$. Let $0$ be taken on $Z$ times (including $Q(0)=0$); a
given nonzero square $S$ times; and a given nonsquare $N$ times. Then $%
Z+(S+N)(q-1)/2=q^{n}$.

We also need the character sums%
\begin{equation}
\sum_{\alpha \neq 0\text{ square}}\psi (\alpha )=\frac{\rho -1}{2}\text{ and
}\sum_{\alpha \text{ nonsquare}}\psi (\alpha )=\frac{-\rho -1}{2},
\label{rho2}
\end{equation}%
which follow from%
\begin{equation*}
\sum_{\alpha \neq 0}\psi (\alpha )=-1,\;\sum_{\alpha \neq 0\text{ square}%
}\psi (\alpha )-\sum_{\alpha \text{ nonsquare}}\psi (\alpha )=\rho .
\end{equation*}%
Collect terms in $\sigma $ according to $Q(x)$:%
\begin{eqnarray*}
\sigma &=&Z\left\{ \frac{1+\delta }{2}+\frac{1-\delta }{2}\right\} +S\left\{
\frac{1+\delta }{2}\frac{\rho -1}{2}+\frac{1-\delta }{2}\frac{-\rho -1}{2}%
\right\} \\
&&+N\left\{ \frac{1+\delta }{2}\frac{-\rho -1}{2}+\frac{1-\delta }{2}\frac{%
\rho -1}{2}\right\} \\
&=&Z-\frac{S+N}{2}+\delta \rho \frac{S-N}{2}.
\end{eqnarray*}%
Then put $Z=q^{n}-(S+N)(q-1)/2$ to obtain%
\begin{equation}
\sigma =q^{n}-\frac{q(S+N)}{2}+\delta \rho \frac{S-N}{2}.  \label{sig2}
\end{equation}

Now we need $S$ and $N$. If $Q_{0}$ is the nonsingular part of $Q$, the
counts for $Q$ are those for $Q_{0}$ multiplied by $q^{n-r}$. Here are these
numbers for $Q$, obtained from \cite[Theorems 6.26 and 6.27]{LN} for $Q_{0}$:%
\begin{equation}
\begin{tabular}{ccc}
& $S$ & $N$ \\
$r$ even: & $q^{n-1}-\delta ^{r/2}\Delta (Q)q^{n-r/2-1}$ & $q^{n-1}-\delta
^{r/2}\Delta (Q)q^{n-r/2-1}=S$ \\
$r$ odd: & $q^{n-1}+\delta ^{(r-1)/2}q^{n-(r+1)/2}\Delta (Q)$ & $%
q^{n-1}-\delta ^{(r-1)/2}q^{n-(r+1)/2}\Delta (Q)$.%
\end{tabular}
\label{SN}
\end{equation}%
Substituting into (\ref{sig2}), we obtain simplifications corresponding to
the parity of $r$.

\begin{itemize}
\item $r$ even: then $S=N$ and%
\begin{eqnarray*}
\sigma &=&q^{n}-\frac{q}{2}(2q^{n-1}-2\delta ^{r/2}\Delta (Q)q^{n-r/2-1}) \\
&=&\delta ^{r/2}\Delta (Q)q^{n-r/2}.
\end{eqnarray*}%
Since $q^{r/2}=\delta ^{r/2}\rho ^{r}$, this is correctly $q^{n}\rho
^{-r}\Delta (Q)$.

\item $r$ odd: then%
\begin{eqnarray*}
\sigma &=&q^{n}-\frac{q}{2}2q^{n-1}+\delta ^{(r-1)/2}q^{n-(r+1)/2}\Delta
(Q)\delta \rho \\
&=&\delta ^{(r+1)/2}q^{n}q^{-(r+1)/2}\rho \Delta (Q) \\
&=&\delta ^{(r+1)/2}q^{n}\delta ^{(r+1)/2}\rho ^{-r-1}\rho \Delta (Q) \\
&=&q^{n}\rho ^{-r}\Delta (Q),
\end{eqnarray*}%
again correct.
\end{itemize}

\subsection{The $S,N$ formulas\label{SecSN}}

In point of fact, the formulas for $S$ and $N$ follow from those for $\omega
|B$. Combining (\ref{wgQ}) and (\ref{sig}) gives%
\begin{equation}
q^{n}\rho ^{-r}\Delta (Q)=q^{n}-\frac{q(S+N)}{2}+\delta \rho \frac{S-N}{2}.
\label{wQ}
\end{equation}%
For a needed second equation, let $Q^{\prime }$ be $Q$ scaled by a
nonsquare. Then the counts for $Q^{\prime }$ are $S^{\prime }=N$ and $%
N^{\prime }=S$, and $\Delta (Q^{\prime })=(-1)^{r}\Delta (Q)$. Formula (\ref%
{wQ}) for $Q^{\prime }$ reads%
\begin{equation}
q^{n}\rho ^{-r}(-1)^{r}\Delta (Q)=q^{n}-\frac{q(S+N)}{2}+\delta \rho \frac{%
N-S}{2}.  \label{wQ'}
\end{equation}%
Solving (\ref{wQ}) and (\ref{wQ'}) for $S$ and $N$ produces%
\begin{eqnarray*}
S &=&q^{n-1}-\frac{1+(-1)^{r}}{2}q^{n-1}\rho ^{-r}\Delta (Q)+\frac{1-(-1)^{r}%
}{2}q^{n}\rho ^{-r-1}\delta \Delta (Q) \\
N &=&q^{n-1}-\frac{1+(-1)^{r}}{2}q^{n-1}\rho ^{-r}\Delta (Q)-\frac{1-(-1)^{r}%
}{2}q^{n}\rho ^{-r-1}\delta \Delta (Q),
\end{eqnarray*}%
and then%
\begin{eqnarray*}
Z &=&q^{n}-(S+N)\frac{q-1}{2} \\
&=&q^{n-1}+\frac{1+(-1)^{r}}{2}q^{n-1}(q-1)\rho ^{-r}\Delta (Q).
\end{eqnarray*}%
These are uniform expressions (perhaps not obvious in traditional
derivations!) which give (\ref{SN}) on taking $r$ even or odd.

Incidentally, when $\delta =-1$ or $q$ is not a square, $\rho $ is not
rational. In that case, the formulas follow from equating coefficients in (%
\ref{wQ}) for the field $\mathbb{Q}(\rho )=\mathbb{Q}+\mathbb{Q\rho }$ and
again solving for $S$ and $N$.

\section{A $q$-binomial identity\label{Secqbinom}}

Recall equation (\ref{sum1}):%
\begin{equation*}
\sum_{Q\in \boldsymbol{S}_{n}}\rho ^{-\mathrm{rank}Q}\Delta (Q)=q^{n(n-1)/2}.
\end{equation*}%
If $\mathrm{rank}Q$ is odd, then with $\nu $ a nonsquare in $GF(q)$, $\Delta
(\nu Q)=-\Delta (Q)$, and the terms for $Q$ and $\nu Q$ in the sum cancel.
Thus as $\rho ^{-2k}=\delta ^{k}q^{-k}$,%
\begin{equation}
q^{n(n-1)/2}=\sum_{Q\in \boldsymbol{S}_{n}}\rho ^{-\mathrm{rank}Q}\Delta
(Q)=\sum_{k=0}^{\left\lfloor n/2\right\rfloor }\sum_{\substack{ Q\in
\boldsymbol{S}_{n}  \\ \mathrm{rank}Q=2k}}\delta ^{k}q^{-k}\Delta (Q).
\label{sum2}
\end{equation}%
Let $\mathrm{rank}Q=2k$. As pointed out in Section \ref{SecIrredG}, if $Q$
is hyperbolic, then $\Delta (Q)=\delta ^{k}$; and if $Q$ is elliptic, then $%
\Delta (Q)=-\delta ^{k}$.

Now we can give a specific formula for $\sum\limits_{\substack{ Q\in
\boldsymbol{S}_{n}  \\ \mathrm{rank}Q=2k}}\delta ^{k}q^{-k}\Delta (Q)$. The
number of terms with $Q$ hyperbolic is
\begin{equation*}
{n \brack 2k}_{q} \times \frac{\left\vert GL(2k,q)\right\vert }{\left\vert
O^{+}(2k,q)\right\vert },
\end{equation*}%
and the number with $Q$ elliptic is%
\begin{equation*}
{n \brack 2k}_{q} \times \frac{\left\vert GL(2k,q)\right\vert }{\left\vert
O^{-}(2k,q)\right\vert }.
\end{equation*}%
Thus%
\begin{equation*}
\sum\limits_{\substack{ Q\in \boldsymbol{S}_{n}  \\ \mathrm{rank}Q=2k}}%
\delta ^{k}q^{-k}\Delta (Q)=\delta ^{k}q^{-k} {n \brack 2k}_{q} \times
\left\vert GL(2k,q)\right\vert \times \delta ^{k}\left( \frac{1}{\left\vert
O^{+}(2k,q)\right\vert }-\frac{1}{\left\vert O^{-}(2k,q)\right\vert }\right)
;
\end{equation*}%
the second $\delta ^{k}$ is the factor needed for $\Delta (Q)$. By (\ref%
{orders}),%
\begin{equation*}
\frac{1}{\left\vert O^{+}(2k,q)\right\vert }-\frac{1}{\left\vert
O^{-}(2k,q)\right\vert }=\frac{1}{q^{k(k-1)}\prod\limits_{i=1}^{k}(q^{2i}-1)}
\end{equation*}%
(note the addition of one more factor in the product). So, again by (\ref%
{orders}),%
\begin{eqnarray*}
\sum_{k=0}^{\left\lfloor n/2\right\rfloor }\sum_{\substack{ Q\in \boldsymbol{%
S}_{n}  \\ \mathrm{rank}Q=2k}}\delta ^{k}q^{-k}\Delta (Q)
&=&\sum_{k=0}^{\left\lfloor n/2\right\rfloor }\delta ^{k}q^{-k} {n \brack 2k}%
_{q} \times \delta ^{k}\frac{q^{k(2k-1)}\prod\limits_{i=1}^{2k}(q^{i}-1)}{%
q^{k(k-1)}\prod\limits_{i=1}^{k}(q^{2i}-1)} \\
&=&\sum_{k=0}^{\left\lfloor n/2\right\rfloor } {n \brack 2k}_{q}
q^{k^{2}-k}\prod\limits_{i=0}^{k-1}(q^{2i+1}-1).
\end{eqnarray*}%
Thus from (\ref{sum2}),%
\begin{equation*}
q^{n(n-1)/2}=\sum_{k=0}^{\left\lfloor n/2\right\rfloor } {n \brack 2k}_{q}
q^{k^{2}-k}\prod\limits_{i=0}^{k-1}(q^{2i+1}-1).
\end{equation*}%
Is that really true? Yes -- it gives the count of the number of symplectic
(skew-symmetric) $n\times n$ matrices over $GF(q)$. From \cite{T}, $%
\left\vert Sp(2k,q)\right\vert =q^{k^{2}}\prod\nolimits_{i=1}^{k}(q^{2i}-1)$%
, so the number of symplectic matrices of rank $2k$ is%
\begin{eqnarray*}
{n \brack 2k}_{q} \times \frac{\left\vert GL(2k,q)\right\vert }{\left\vert
Sp(2k,q)\right\vert } &=& {n \brack 2k}_{q} \frac{q^{k(2k-1)}\prod%
\limits_{i=1}^{2k}(q^{i}-1)}{q^{k^{2}}\prod\limits_{i=1}^{k}(q^{2i}-1)} \\
&=& {n \brack 2k}_{q} q^{k^{2}-k}\prod\limits_{i=0}^{k-1}(q^{2i+1}-1)
\end{eqnarray*}%
\cite[Theorem 13.2.48]{HFF}, and that is to be summed from $k=0$ to $%
\left\lfloor n/2\right\rfloor $ (giving 1 at $k=0$). But the total number of
symplectic $n\times n$ matrices is simply $q^{n(n-1)/2}$.

\section{Minimum Weil-free subgroups of $G$\label{MinWeilfree}}

\label{SecMinWeil-freeSubgr}

Suppose that $H$ is a subgroup of $G$. If $\omega |H$ is also Weil-free,
then $\left\vert H\right\vert \geq q^{n}$. It cannot be that $\left\vert
H\right\vert =q^{n}$, because then $\omega |H$ would just be the sum of all $%
q^{n}$ linear characters of $H$. But that sum is $q^{n}$ at $I$ and $0$ at $%
h\neq I$, whereas $\omega (h)\neq 0$, by (\ref{w(1)}). Thus $\left\vert
H\right\vert \geq 2q^{n}$. We shall show that there are Weil-free subgroups
of order $2q^{n}$. If $\left\vert H\right\vert =2q^{n}$, then $%
H=\left\langle -I\right\rangle \times (H\cap B)$.

Adapting the orbit count in the proof of Theorem \ref{TheoremGWeilfree}, we
find that $2q^n\times$(number of orbits) of such an $H$ is%
\begin{equation*}
2q^{n}+(q^{n}-1)q^{n}+\sum_{x}\sum_{y\neq 0}\left\vert H_{(x,y)}\right\vert
\geq 2q^{n}+(q^{n}-1)q^{n}+q^{n}(q^{n}-1)=2q^{2n}.
\end{equation*}%
So if $H$ is to be Weil-free, each $H_{(x,y)}$ with $y\neq 0$ must be just $%
\left\{ I\right\} $. Again as in the proof of Theorem \ref{TheoremGWeilfree}%
, this means that if $g_{Q}\in H$, with $Q\neq 0$, then $Q$ must have full
rank $n$. So what would work is an $n$-dimensional subspace $W$ of $%
\boldsymbol{S}_{n}$ whose nonzero members are all nonsingular. Then $H\cap B$
would be $\left\{ g_{Q}|Q\in W\right\} $.

To construct $W$, realize $GF(q)^{n}$ as $GF(q^{n})$ and let $\mathrm{tr}$
be the trace function $GF(q^{n})\rightarrow GF(q)$. Then for $\alpha \in
GF(q^{n})$, the function $Q_{\alpha }$ given by $Q_{\alpha }(\zeta )=\mathrm{%
tr}(\alpha \zeta ^{2})$ is a quadratic form on $GF(q^{n})$. The
corresponding bilinear form is $B_{\alpha }(\xi ,\eta )=\mathrm{tr}(\alpha
\xi \eta )$. This is nondegenerate when $\alpha \neq 0$, making $Q_{\alpha }$
nonsingular then. Now let $W=\left\{ Q_{\alpha }|\alpha \in GF(q^{n}\right\}
$.

\subsection{An evaluation for $H$}

Formula (\ref{sig}) for $Q_{a}$ reads%
\begin{equation*}
q^{n}\rho ^{-n}\Delta (Q_{\alpha })=\sum_{\zeta \in GF(q^{n})}\left\{ \frac{%
1+\delta }{2}\psi (Q_{\alpha }(\zeta ))+\frac{1-\delta }{2}\psi (\nu
Q_{\alpha }(\zeta ))\right\} .
\end{equation*}%
Because $\nu \in GF(q)$ and $\rho ^{2}=\delta q$, this becomes%
\begin{equation}
\delta ^{n}\rho ^{n}\Delta (Q_{\alpha })=\sum_{\zeta \in GF(q^{n})}\left\{
\frac{1+\delta }{2}\psi (\mathrm{tr}(\alpha \zeta ^{2}))+\frac{1-\delta }{2}%
\psi (\mathrm{tr}(\nu \alpha \zeta ^{2}))\right\} .  \label{wqn}
\end{equation}%
One has to be careful with the matrix interpretation. Let $\zeta _{1},\ldots
,\zeta _{n}$ be a $GF(q)$-basis of $GF(q^{n})$. Then the matrix for $%
B_{\alpha }$ is $\left[ \mathrm{tr}(\alpha \zeta _{i}\zeta _{j}\right] $.
This can be written as%
\begin{eqnarray*}
\left[ \mathrm{tr}(\alpha \zeta _{i}\zeta _{j}\right] &=&%
\begin{bmatrix}
\zeta _{1} & \zeta _{1}^{q} & \ldots & \zeta _{1}^{q^{n-1}} \\
\zeta _{2} & \zeta _{2}^{q} & \ldots & \zeta _{2}^{q^{n-1}} \\
\vdots & \vdots & \ldots & \vdots \\
\zeta _{n} & \zeta _{n}^{q} & \ldots & \zeta _{n}^{q^{n-1}}%
\end{bmatrix}%
\begin{bmatrix}
\alpha & 0 & \ldots & 0 \\
0 & \alpha ^{q} & \ldots & 0 \\
\vdots & \vdots & \ldots & \vdots \\
0 & 0 & \ldots & \alpha ^{q^{n-1}}%
\end{bmatrix}%
\begin{bmatrix}
\zeta _{1} & \zeta _{2} & \ldots & \zeta _{n} \\
\zeta _{1}^{q} & \zeta _{2}^{q} & \ldots & \zeta _{n}^{q} \\
\vdots & \vdots & \ldots & \vdots \\
\zeta _{1}^{q^{n-1}} & \zeta _{2}^{q^{n-1}} & \ldots & \zeta _{n}^{q^{n-1}}%
\end{bmatrix}
\\
&=&D\times \mathrm{diag}(\alpha ,\alpha ^{q},\ldots ,\alpha
^{q^{n-1}})\times D^{T}.
\end{eqnarray*}%
Taking determinants gives
\begin{equation*}
\det \left[ \mathrm{tr}(\alpha \zeta _{i}\zeta _{j}\right] =(\det
D)^{2}\prod\nolimits_{i=0}^{n-1}\alpha ^{q^{i}}=(\det D)^{2}N(\alpha ),
\end{equation*}%
in which $(\det D)^{2}$ is the discriminant of the extension $%
GF(q^{n})/GF(q) $ and $\prod\nolimits_{i=0}^{n-1}\alpha ^{q^{i}}$ is the
norm $N(\alpha )$ of $\alpha $. Then%
\begin{equation*}
\Delta (Q_{a})=\chi ((\det D)^{2})\chi (N(\alpha )).
\end{equation*}%
Applying the automorphism $\xi \rightarrow \xi ^{q}$ to $D$ cycles its
columns; so $(\det D)^{q}=(-1)^{n-1}\det D$, the sign being that of an $n$%
-cycle. Thus $\chi ((\det D)^{2})=(-1)^{n-1}$: $(\det D)^{2}$ is a square in
$GF(q)$ only when its square-root $\det D$ is in $GF(q)$! For $\chi
(N(\alpha ))$ we have
\begin{equation*}
\chi (N(\alpha ))=N(\alpha )^{\frac{q-1}{2}}=\left( \alpha ^{\frac{q^{n}-1}{%
q-1}}\right) ^{\frac{q-1}{2}}=\alpha ^{\frac{q^{n}-1}{2}}=X(\alpha ),
\end{equation*}%
where $X(\alpha )$ is the quadratic character of $\alpha $ for the field $%
GF(q^{n})$ (we can determine any $\chi (z)$ by reading it in $GF(q)$). All
together, $\Delta (Q_{\alpha })=(-1)^{n-1}X(\alpha )$.

For the right side of (\ref{wqn}), we have that $\sum_{\zeta }\psi (\mathrm{%
tr}(\beta \zeta ^{2}))=X(\beta )P$, where $P$ is the \textquotedblleft $\rho
$\textquotedblright\ for $GF(q^{n})$, by the formulas in (\ref{rho2}) for $%
GF(q^{n})$. Moreover, since $\nu $ is a nonsquare in $GF(q)$, $X(\nu
)=(-1)^{n}$. Therefore (\ref{wqn}) becomes%
\begin{eqnarray*}
\delta ^{n}\rho ^{n}(-1)^{n-1}X(\alpha ) &=&\left\{ \frac{1+\delta }{2}%
X(\alpha )P+\frac{1-\delta }{2}(-1)^{n}X(\alpha )P\right\} \\
&=&X(\alpha )P\left\{ \frac{1+(-1)^{n}}{2}+\delta \frac{1-(-1)^{n}}{2}%
\right\} ,
\end{eqnarray*}%
or%
\begin{equation*}
\delta ^{n}\rho ^{n}(-1)^{n-1}=P\left\{ \frac{1+(-1)^{n}}{2}+\delta \frac{%
1-(-1)^{n}}{2}\right\} .
\end{equation*}%
This simplifies to%
\begin{equation*}
P=(-1)^{n-1}\rho ^{n},
\end{equation*}%
on sorting by the parity of $n$. That is a particular instance of the
Davenport-Hasse theorem on lifted Gauss sums \cite[Theorem 5.14]{LN}.

\end{document}